\newcommand\dC{\mathbb{C}}
\newcommand\dN{\mathbb{N}}
\newcommand\dZ{\mathbb{Z}}
\newcommand\cF{\mathcal{F}}
\newcommand\cJ{\mathcal{J}}
\newcommand\cT{\mathcal{T}}
\newcommand\epsi{\varepsilon}
\DeclareMathOperator{\re}{Re}
\DeclareMathOperator{\im}{Im}
\DeclareMathOperator{\meas}{meas}
\DeclareMathOperator{\dist}{dist}
\newtheorem{thm}{Theorem}
\newtheorem{lemma}[thm]{Lemma}
\theoremstyle{definition}
\numberwithin{equation}{section}
\begin{document}
\title{Transcendental Julia sets of positive finite Lebesgue measure}
\author{Mareike Wolff}
\date{}
\maketitle

\begin{abstract}
\noindent We show that there exists a transcendental entire function whose Julia set has positive finite Lebesgue measure.\\

\noindent Keywords: complex dynamics, Julia set, Lebesgue measure, area\\

\noindent 2020 Mathematical Subject Classification: 37F10 (Primary), 30D05 (Secondary)
\end{abstract}

\section{Introduction}

For an entire function $f$, let $f^n$ denote the $n$th iterate of $f$. The \textit{Fatou set}, $\cF(f)$, consists of all $z$ such that the iterates $f^n$ form a normal family in a neighbourhood of $z$. Its complement, $\cJ(f)$, is called the \textit{Julia set}. Clearly, $\cF(f)$ is open and $\cJ(f)$ is closed. Moreover, $\cJ(f)$ is always non-empty, and either $\cJ(f)=\dC$ or $\cJ(f)$ contains no interior points. An introduction to iteration theory of transcendental entire functions can be found in \cite{bergweiler_survey}.

In 1987, McMullen \cite{mcmullen} showed that while the Julia set of $\lambda e^z$ for $\lambda\in(0,1/e)$ has Lebesgue measure zero, the Julia set of $\sin(az+b)$ always has positive Lebesgue measure. Around the same time, Eremenko and Ljubich \cite{eremenko-ljubich} constructed a transcendental entire function whose Julia set has positive Lebesgue measure using approximation theory. For both of these examples of Julia sets with positive Lebesgue measure, it can be seen that the measure of the Julia set is in fact infinite. Since then, various authors have studied the Lebesgue measure of Julia sets. Results on Julia sets of Lebesgue measure zero are given in \cite{eremenko-ljubich2, stallard, jankowski, zheng, w2}. Julia sets of positive Lebesgue measure are treated in \cite{bergweiler, aspenberg-bergweiler, bergweiler-chyzhykov, sixsmith}.  For some entire functions, the Julia set can be seen to be very large in the sense that it does not only have infinite measure but one can bound the size of its complement, the Fatou set. Schubert \cite{schubert} showed that the Lebesgue measure of the Fatou set of $\sin(z)$ is finite in any vertical strip of width $2\pi$. Hemke \cite{hemke} and the author \cite{w1} gave examples of transcendental entire functions whose Fatou set has finite measure. Now the natural question arises whether the Julia set of a transcendental entire function can also have positive measure and still be small in the sense that the Julia set itself has finite measure. In the paper at hand, we answer this question affirmatively.

\begin{thm}  \label{thm_main}  
There exists a transcendental entire function whose Julia set has positive finite Lebesgue measure.
\end{thm}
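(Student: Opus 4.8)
The plan is to build the function $f$ as an infinite product or as a limit of explicit entire functions, following the classical strategy used by McMullen and later refined by Bergweiler, Aspenberg, and others: one wants a transcendental entire function that on a large "bulk" set behaves expandingly and maps points off to infinity (placing that bulk in the Julia set by a measure-theoretic/Lebesgue-density argument), while simultaneously controlling the geometry so that the escaping bulk does not fill up all of $\mathbb C$. A natural candidate is a function close to $z\mapsto \lambda\sin(z)$ or a suitable sparse product $f(z)=c\,z\prod_{k}\bigl(1-z/a_k\bigr)$ with zeros $a_k$ lying on a slowly growing sequence, engineered so that $f$ is very large off a thin neighbourhood of $\mathbb R$ (or of some curve system) and so that a definite fraction of each relevant region escapes. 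I would first fix the ``target'' Fatou component — e.g.\ an attracting fixed point $0$ with a large immediate basin — and then arrange that outside a set $A$ of finite measure the iterates converge to $0$; the Julia set is then contained in $A$, giving finiteness for free.

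The key steps, in order, would be: (1) choose parameters (the zeros $a_k$, a multiplier $c$) so that $f$ has an attracting fixed point at $0$ whose immediate basin contains a large disc, and so that $|f(z)|$ is enormous except in a union $\bigcup_k D_k$ of small discs around the $a_k$'s whose total area is finite — this uses estimates on canonical products, e.g.\ $\log|f(z)|\sim$ (counting function of the $a_k$) away from the zeros; (2) show that from most of $\mathbb C\setminus\bigcup_k D_k$ the orbit lands in the immediate basin of $0$ after one or two steps, so $\mathcal J(f)\subset \bigcup_k D_k \cup (\text{a further small exceptional set})$, hence $\meas(\mathcal J(f))<\infty$; (3) for positivity, carry out a McMullen-type argument on the discs $D_k$ (or on an invariant subfamily of them): find within a positive-measure subset of some $D_k$ points whose orbits stay in $\bigcup_k D_k$ forever and are non-normal there, using a distortion/Koebe argument together with the expansion $|f'|$ large on $D_k$ to push a density-$1$ statement through the iteration — typically one builds a nested sequence of ``good'' subsets of shrinking but controlled proportion and takes the intersection, which has positive measure and lies in $\mathcal J(f)$.

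The main obstacle, and the heart of the argument, is the tension in step (3) versus step (1)--(2): to get $\mathcal J(f)$ of \emph{finite} measure one wants the ``bad'' region $\bigcup_k D_k$ to be as small as possible, but to get \emph{positive} measure one needs the expanding dynamics on that region to be robust enough that a definite fraction survives all iterates without escaping to the attracting basin. Concretely, on each $D_k$ the map $f$ is close to an affine map of huge derivative, so it spreads $D_k$ across a large part of the plane; one must ensure that the preimage in $D_k$ of $\bigcup_j D_j$ still occupies a not-too-small proportion of $D_k$, uniformly enough that the infinite intersection does not collapse to measure zero. Balancing these requires a careful quantitative choice of how fast $a_k\to\infty$ and how the radii of $D_k$ decay, and proving the requisite distortion estimates for the canonical product $f$ on and near the $D_k$; this is where most of the technical work will go, and it is the step I expect to be hardest to make rigorous.
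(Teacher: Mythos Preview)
Your proposal takes a quite different route from the paper, which does not build $f$ explicitly as a canonical product or a perturbation of $\sin$ but instead invokes the Eremenko--Ljubich approximation lemma (Runge-type approximation on a sequence of disjoint compacta) to \emph{prescribe} the behaviour of $f$ independently on two families of sets: on a grid of large squares $P_{j,k}$ whose union covers $\dC$ up to a set of finite area, $f$ is forced to map into a fixed square $P_{0,0}$ (hence Fatou by Montel), while on a sequence of tiny squares $R_{j,l}$ near the positive integers $f$ is forced to be $\delta_j^2$-close to specific expanding affine maps sending $R_{j,l}^1$ onto $Q_{j+1}^2$; a nested-intersection/Marty argument then gives a positive-measure subset of $\cJ(f)$.

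The reason the paper's approach works and yours runs into trouble is exactly the tension you flag at the end, but in your setup it is not merely a delicate quantitative balance---it is an outright incompatibility. In step~(1) you ask that $|f(z)|$ be enormous for $z\notin\bigcup_k D_k$, yet in step~(2) you ask that those same points land after one or two iterates in the basin of an attracting fixed point at $0$. These cannot both hold: if $|f(z)|$ is enormous then $f(z)$ is far from $0$, and since $\bigcup_k D_k$ has finite area the image $f(z)$ will again, for almost every such $z$, lie outside $\bigcup_k D_k$, so $|f^2(z)|$ is enormous too and the orbit escapes to $\infty$ rather than converging to $0$. For transcendental entire $f$ the escaping set has boundary equal to $\cJ(f)$, so this forces $\meas(\cJ(f))=\infty$, not $<\infty$. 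Conversely, if you flip the picture and arrange $f$ to be \emph{small} on the complement of $\bigcup_k D_k$ so that orbits genuinely fall into the basin of $0$, then for a canonical product $f$ is even smaller on the $D_k$ themselves (they are neighbourhoods of the zeros, where $f\approx f'(a_k)(z-a_k)$ sends $D_k$ near $0$), and you lose the expanding covering structure needed for the McMullen nested argument. An explicit product or $\sin$-type function ties the global growth to the local behaviour near the zeros and cannot decouple ``contracting into a basin on a cofinite-area set'' from ``uniformly expanding on a small forward-invariant family''; approximation theory decouples them trivially, which is why the paper uses it.
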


We will prove this in Section \ref{sec_proof} using approximation theory, following a similar approach as Eremenko and Ljubich in their paper \cite{eremenko-ljubich} mentioned above.

\section{Proof of Theorem \ref{thm_main}}  \label{sec_proof}

The proof uses the following result of Eremenko and Ljubich \cite[Main Lemma]{eremenko-ljubich}. Let $\dist(\cdot,\cdot)$ denote the Euclidean distance in $\dC$. 

\begin{lemma}  \label{lemma_el}
Let $(G_j)$ be a sequence of pairwise disjoint compact subsets of the complex plane such that $\dC\setminus G_j$ is connected for all $j\ge0$ and $\dist(G_j,0)\to\infty$ as $j\to\infty$. Let $(\epsi_j)$ be a sequence of positive real numbers and let $\Phi$ be holomorphic in a neighbourhood of $\bigcup_{j\ge0}G_j$. Then there exists an entire function $f$ such that 
\[\max_{z\in G_j}|f(z)-\Phi(z)|<\epsi_j\] 
for all $j\ge0$.
\end{lemma}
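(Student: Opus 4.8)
The plan is to construct $f$ as the locally uniform limit of a sequence of polynomials $g_0,g_1,g_2,\dots$ obtained by iterated Runge approximation (``pole pushing''), so that $f=\lim_n g_n$ is automatically entire. The polynomials will be built so that, once a given $G_j$ has been ``switched on'' at some stage, the estimate $|g_n-\Phi|<\epsi_j/2$ holds on $G_j$ and every later correction is summably small there, forcing $|f-\Phi|<\epsi_j$ on $G_j$ in the limit. First I would set up an exhaustion: since $\dist(G_j,0)\to\infty$, only finitely many $G_j$ meet any bounded set, so $\bigcup_j G_j$ is closed, and I may choose radii $R_0<R_1<\dots\to\infty$ and put $J_n=\{j:G_j\cap\overline{D(0,R_n)}\neq\emptyset\}$, a finite, increasing family with $\bigcup_n J_n=\{0,1,2,\dots\}$.

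The inductive step is the core. Given the polynomial $g_{n-1}$, I define a function $h_n$ on a neighbourhood of the compact set $K_n:=\overline{D(0,R_{n-1})}\cup\bigcup_{j\in J_n}G_j$ by setting $h_n=0$ on a neighbourhood of $P_{n-1}:=\overline{D(0,R_{n-1})}\cup\bigcup_{j\in J_{n-1}}G_j$ and $h_n=\Phi-g_{n-1}$ on a neighbourhood of the newly switched-on sets $G_j$, $j\in J_n\setminus J_{n-1}$; this is well defined and holomorphic because these neighbourhoods are disjoint and $\Phi-g_{n-1}$ is holomorphic near each $G_j$. Runge's theorem then yields a polynomial $p_n$ with $\max_{K_n}|p_n-h_n|<\delta_n$ for a prescribed $\delta_n>0$, and I set $g_n=g_{n-1}+p_n$. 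Taking $\delta_n<2^{-n}\min\{1,\min_{j\in J_{n-1}}\epsi_j\}$ makes $\|p_n\|_{\overline{D(0,R_{n-1})}}$ summable on every fixed disk, so $g_n$ converges locally uniformly to an entire function $f$; and if $n_0$ is the stage at which $G_j$ switches on, then $|g_{n_0}-\Phi|<\epsi_j/2$ on $G_j$ while the later corrections contribute less than $\sum_{n>n_0}\delta_n<\epsi_j/2$, giving $|f-\Phi|<\epsi_j$ on $G_j$, as required.

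The main obstacle is ensuring that each $K_n$ has connected complement, which is exactly the hypothesis Runge's theorem requires. By Alexander duality in the Riemann sphere, a finite union of compacta has connected complement as soon as each piece does and the pieces are pairwise disjoint, since then the relevant first cohomology splits as a direct sum and vanishes. The sets $G_j$ are pairwise disjoint and each has connected complement by hypothesis, so the only difficulty is the disk $\overline{D(0,R_{n-1})}$, which may meet, and even partially ``lasso'', some $G_j$ that stick out of it and thereby close a loop. I would remove this by choosing the radii $R_n$ with care and adjusting the schedule of which sets are switched on: any $G_j$ that the boundary circle would cut in a loop-closing way is absorbed into $P_{n-1}$ (switched on one stage earlier) or enclosed by enlarging $R_{n-1}$ until it contains $G_j$ entirely, which is possible because each $G_j$ is compact and only finitely many are in play at each stage. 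Verifying that the radii and the switch-on schedule can be chosen simultaneously so that every $K_n$ has connected complement is the technical heart of the argument; once it is in place, the Runge step, the summability estimate, and the preservation of the approximations on each $G_j$ are routine.
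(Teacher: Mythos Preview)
The paper does not prove this lemma at all: it is quoted verbatim as the ``Main Lemma'' of Eremenko and Ljubich and used as a black box, so there is no in-paper proof to compare against.

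Your sketch is the standard iterated-Runge argument and is essentially how the result is proved in the literature, so the overall strategy is sound. You have also correctly isolated the genuine difficulty, namely arranging that every $K_n$ has connected complement. However, the fix you propose---``switch on one stage earlier'' or ``enlarge $R_{n-1}$ until it contains the offending $G_j$''---is not self-evidently finite. Enlarging $R_{n-1}$ to swallow some $G_j$ may cause the new disk to clip another $G_{j'}$, forcing a further enlargement, and this can cascade indefinitely: for instance with $G_j$ a chain of slightly overlapping ranges such as $G_j=[j+1,\,j+2-\tfrac{1}{j+2}]\subset\mathbb R$, every enlargement triggers the next. Likewise, ``switching on earlier'' is not available once the induction is underway. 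So as written, the third paragraph asserts exactly the point that needs proof.

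One clean way to close the gap is to drop the requirement that the old $G_j$ sit inside the disk and instead keep them explicitly in $K_n$: take $K_n=\widehat P_{n-1}\cup\bigcup_{j\in J_n\setminus J_{n-1}}G_j$, where $\widehat P_{n-1}$ is the polynomial hull (fill in the bounded complementary components) of $\overline{D(0,R_{n-1})}\cup\bigcup_{j\in J_{n-1}}G_j$, and set $h_n\equiv 0$ on a neighbourhood of $\widehat P_{n-1}$. Then $\widehat P_{n-1}$ has connected complement by construction, and if the newly switched-on $G_j$ are chosen with $\dist(G_j,0)>\max\{|z|:z\in\widehat P_{n-1}\}$ they are disjoint from $\widehat P_{n-1}$, so your Alexander-duality observation gives the connected complement of $K_n$. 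The remaining bookkeeping is to ensure every index is eventually switched on before it is swallowed by some $\widehat P_m$; this can be arranged by switching on, at stage $n$, \emph{all} $G_j$ with $\dist(G_j,0)$ between $\max\{|z|:z\in\widehat P_{n-1}\}$ and the next radius---a finite set since $\dist(G_j,0)\to\infty$. With that scheduling the rest of your argument (summable corrections, locally uniform convergence, preservation of the estimates on each $G_j$) goes through as you wrote it.
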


\begin{proof}[Proof of Theorem \ref{thm_main}]
The strategy of the proof is as follows. Using the approximation result stated in Lemma \ref{lemma_el}, we construct an entire function $f$ with the following properties:
\begin{itemize}
\item The function $f$ leaves a square $P_{0,0}$ invariant. By Montel's theorem, $P_{0,0}\subset\cF(f)$.
\item In addition, $f$ maps certain squares $P_{j,k}$ for $j,k\in\dZ$ into $P_{0,0}$. Then each $P_{j,k}$ is also contained in $\cF(f)$. The squares $P_{j,k}$ are chosen such that $\dC\setminus\bigcup_{j,k\in\dZ}P_{j,k}$ has finite Lebesgue measure.
\item There is a set of positive measure consisting of points which stay in the union of small squares $Q_{j}^3$ centred at the positive integers under iteration and can be shown to be in the Julia set.
\end{itemize}
See Figure \ref{fig_squaredistribution} for an illustration of the sets $P_{j,k}$ and $Q_j^3$. Let us now give the precise definition of these and some other squares.

\begin{figure}[ht]
\centering
\begin{tikzpicture}
\node at (0,0) {\tiny{x}};
\node [anchor=north] at (0,0) {\small{$0$}};

\foreach \j in {-1,...,3} \foreach \k in {-1,0,1} 
\draw [thick]({2*\j + 2^(-abs(\j)-abs(\k)-1)}, {2*\k + 2^(-abs(\j)-abs(\k)-1)}) rectangle ({2*\j+2-2^(-abs(\j)-abs(\k)-1)},{2*\k+2- 2^(-abs(\j)-abs(\k)-1)});
\node at (1,1) {$P_{0,0}$};
\node at (3,1) {$P_{1,0}$};
\node at (1,3) {$P_{0,1}$};

\foreach \j in {1,...,7} \fill ({\j-2^(-\j-2)}, {-2^(-\j-2)}) rectangle ({\j+2^(-\j-2)}, {2^(-\j-2)});
\node[anchor=east] at (0.95,0.1) {$Q_1^3$};
\node[anchor=east] at (2.05, 0.1) {$Q_2^3$};
\end{tikzpicture}

\caption{An illustration of the squares $P_{j,k}$ (white), which are contained in $\cF(f)$,  and $Q_{j}^3$ (black), which contain a subset of $\cJ(f)$ of positive measure.}  \label{fig_squaredistribution}
\end{figure}

For $j,k\in\dZ$, let $P_{j,k}$ be the square defined as
\[P_{j,k}:=\{z\in\dC:\,|\re z-(2j+1)|\le1-2^{-|j|-|k|-1},\,|\im z-(2k+1)|\le1-2^{-|j|-|k|-1}\}.\]
For $j\in\dN$, set 
\[\delta_j:=2^{-2j-6},\]
and define three nested squares as follows:
\begin{align}
Q_j^1&:=\{z\in\dC:\,|\re z-j|\le2^{-j-2}-2\delta_{j},\,|\im z|\le2^{-j-2}-2\delta_{j}\},\\
Q_j^2&:=\{z\in\dC:\,|\re z-j|\le2^{-j-2}-\delta_j,\,|\im z|\le2^{-j-2}-\delta_j\},\\
Q_j^3&:=\{z\in\dC:\,|\re z-j|\le2^{-j-2},\,|\im z|\le2^{-j-2}\}.
\end{align}
For each $j\in\dN$, divide $Q_j^3$ into $16$ equally sized squares $R_{j,1}^3,...,R_{j,16}^3$ of side length $2^{-j-3}$. Let $R_{j,l}^1$ and $R_{j,l}^2$ be the squares with the same centre as $R_{j,l}^3$ but side lengths $2^{-j-3}-4\delta_j$ and $2^{-j-3}-2\delta_j$, respectively. See Figure \ref{fig_squares_J} for an illustration of these squares. 

\begin{figure}
\centering
\begin{tikzpicture}
\draw[thick](-4,-4) rectangle (4,4);
\fill[black!35] (-3.8,-3.8) rectangle (3.8,3.8);
\fill[black!50] (-3.6,-3.6) rectangle (3.6,3.6);

\foreach \j in {-2,...,1}  \foreach \k in {-2,...,1} {
\draw[thick] ({2*\j},{2*\k}) rectangle ({2*\j+2},{2*\k+2});
\draw[thick, dashed] ({2*\j+0.2},{2*\k+0.2}) rectangle ({2*\j+1.8},{2*\k+1.8});
\draw[thick, dotted] ({2*\j+0.4},{2*\k+0.4}) rectangle ({2*\j+1.6},{2*\k+1.6});}

\node [anchor=east, align=left] at (-4.5,3.5) {large squares\\ (shaded)};
\draw[ultra thick,->] (-4.5,2.1)--(-3.85,2.1);
\node [anchor=east] at (-4.5,2.1) {$Q_j^3$};
\draw[ultra thick,->] (-4.5,0.1)--(-3.65,0.1);
\node [anchor=east] at (-4.5,0.1) {$Q_j^2$};
\draw[ultra thick,->] (-4.5,-1.9)--(-3.45,-1.9);
\node [anchor=east] at (-4.5,-1.9) {$Q_j^1$};

\node[anchor=west, align=left] at (4.5,3.5) {small squares\\ (bounded by lines)};
\draw[ultra thick,->] (4.5,1.9)--(3.85,1.9);
\node[anchor=west] at (4.5,1.9) {$R_{j,8}^3$};
\draw[ultra thick,->] (4.5,1.3)--(3.65,1.3);
\node[anchor=west] at (4.5,1.3) {$R_{j,8}^2$};
\draw[ultra thick, ->] (4.5, .7)--(3.45,.7);
\node[anchor=west] at (4.5,.7) {$R_{j,8}^1$};

\draw[ultra thick,->] (-3,4.5)--(-3,3.85);
\node[anchor=south] at (-3,4.5){$R_{j,1}^3$};
\draw[ultra thick, ->] (-1,4.5)--(-1,3.85);
\node[anchor=south] at (-1,4.5){$R_{j,2}^3$};
\end{tikzpicture}

\caption{An illustration of the squares $Q_j^m$ and $R_{j,l}^m$. The large square, $Q_j^3$, contains slightly smaller squares, $Q_j^2$ (light and dark grey) and $Q_j^1$ (dark grey). Moreover, $Q_j^3$ is divided into 16 squares $R_{j,l}^3$ by the solid lines, which contain smaller squares $R_{j,l}^2$ (dashed) and $R_{j,l}^1$ (dotted).}   \label{fig_squares_J}
\end{figure}
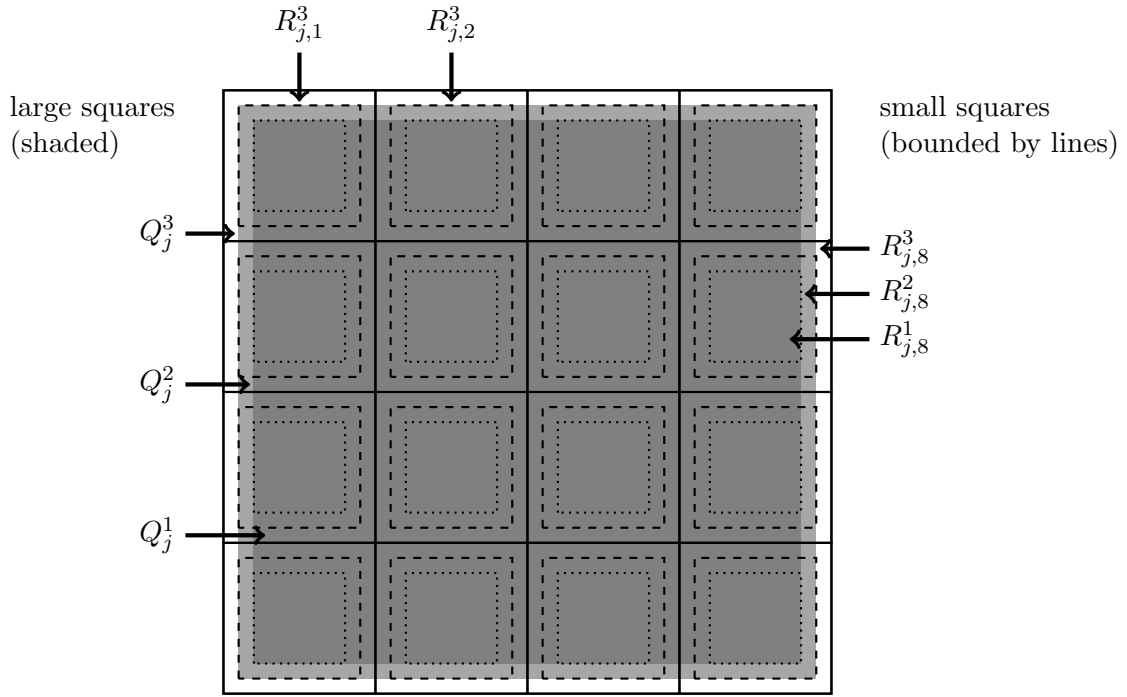

Let $\Phi_{j,l}$ be the affine map that maps $R_{j,l}^1$ onto $Q_{j+1}^2$ without rotation. By Lemma \ref{lemma_el}, there exists an entire function $f$ such that
\begin{enumerate}[(i)]
\item $|f(z)-(1+i)|<1/2$ for all $z\in P:=\bigcup_{j,k\in\dZ}P_{j,k}$;
\item $|f(z)-\Phi_{j,l}|<\delta_j^2$ for all $z\in R_{j,l}^2$ for each $j\in\dN$ and $l\in\{1,...,16\}$.
\end{enumerate}

By (i), $f$ is transcendental. 

Next, we show that $\cJ(f)$ has finite measure. By (i), $f(P)$ is contained in the interior of $P_{0,0}\subset P$. Montel's theorem yields that $P\subset\cF(f)$. For $j,k\in\dZ$, let $S_{j,k}$ be the square defined as 
\[S_{j,k}:=\{z\in\dC:\,|\re z-(2j+1)|\le1,\,|\im z-(2k+1)|\le1\}.\]
Then 
\begin{equation}
\begin{split}
\meas(S_{j,k}\setminus P_{j,k})&=4-4(1-2^{-|j|-|k|-1})^2=8\cdot2^{-|j|-|k|-1}-4\cdot4^{-|j|-|k|-1}\\
&<8\cdot2^{-|j|-|k|-1}=4\cdot2^{-|j|}\cdot2^{-|k|}.
\end{split}
\end{equation}
Since also $\bigcup_{j,k\in\dZ}S_{j,k}=\dC$, we obtain
\[\meas(\cJ(f))\le\meas(\dC\setminus P)=\sum_{j,k\in\dZ}\meas(S_{j,k}\setminus P_{j,k})\le4\sum_{j\in\dZ}2^{-|j|}\sum_{k\in\dZ}2^{-|k|}<\infty.\]

Following \cite{eremenko-ljubich}, we show that $\cJ(f)$ has positive measure. In order to do so, we construct a subset of $\cJ(f)$ as an intersection of nested sets. Recursively define collections $\cT_j$ of sets as follows. Set
\[\cT_1:=\{R_{1,1}^1\}.\]
Note that by (ii) and since $\delta_j^2<\delta_{j+1}$, we have $f(\partial R_{j,l}^1)\cap Q_{j+1}^1=\emptyset$ and $f(R_{j,l}^1)\cap Q_{j+1}^1 \ne \emptyset$ for all  $j\in\dN$ and $l\in\{1,...,16\}$. Thus $f(R_{j,l}^1)\supset Q_{j+1}^1\supset\bigcup_{m=1}^{16}R_{j+1,m}^1$. For $j\in\dN$, suppose that $\cT_j$ has been defined, and let 
\[\cT_{j+1}:=\{T_{j+1}:\,f^j(T_{j+1})\in\{R_{j+1,1}^1,....,R_{j+1,16}^1\},\,T_{j+1}\subset T_{j} \text{ for some }T_{j}\in\cT_{j}\}.\]
Set 
\[T:=\bigcap_{j\in\dN}\bigcup_{T_j\in\cT_j}T_j.\]
Now we show that $T\subset\cJ(f)$.

Let $z\in R_{j,l}^1$. Then by Cauchy's inequality and (ii),
\begin{equation}   \label{eq_f'approx}
|f'(z)-\Phi_{j,l}'(z)|\le\frac{1}{\delta_j}\sup_{z\in R_{j,l}^2}|f(z)-\Phi_{j,l}(z)|<\delta_j.
\end{equation}
Now $\Phi_{j,l}$ is the affine map that maps the square $R_{j,l}^1$, whose side length is $2^{-j-3}-4\delta_j$, onto $Q_{j+1}^2$, whose side length is $2\cdot (2^{-j-3}-\delta_{j+1})$. Thus,
\begin{equation}   \label{eq_Phi'}
\begin{split}
\Phi_{j,l}'(z)&=\frac{2\cdot(2^{-j-3}-\delta_{j+1})}{2^{-j-3}-4\delta_j}=\frac{2-2^{j+4}\delta_{j+1}}{1-2^{j+5}\delta_j}=\frac{2-2\cdot2^{j+5}\delta_j +2^{j+6}\delta_j-2^{j+4}\delta_{j+1}}{1-2^{j+5}\delta_j}\\
&=2+\frac{2^{j+4}(4\delta_j-\delta_{j+1})}{1-2^{j+5}\delta_j}
>2+\frac{2^{j+4}\cdot3\delta_j}{1-2^{j+5}\delta_j}>2+\delta_j.
\end{split}
\end{equation}
Hence, $|f'(z)|>2$. 

For $z\in T$, we have $f^j(z)\in\bigcup_{l=1}^{16}R_{j+1,l}^1$ for all $j\ge0$, and thus
\begin{equation}  \label{eq_fj'}
|(f^j)'(z)|=\prod_{m=0}^{j-1}|f'(f^m(z))|>2^j
\end{equation}
for all $j\in\dN$. Moreover, since $R_{j+1,l}^1\subset Q_{j+1}^1$, we have $|f^j(z)|=O(j)$ as $j\to\infty$.
For the spherical derivative $(f^j)^{\#}$ of $f^j$, this yields
\[(f^j)^\#(z)=\frac{2|(f^j)'(z)|}{1+|f^j(z)|^2}>\frac{2^{j+1}}{1+O(j^2)}\to\infty\]
as $j\to\infty$. By Marty's criterion, $T\subset\cJ(f)$.

Let us now show that $T$ has positive Lebesgue measure. For $z\in R_{j,l}^1$, by \eqref{eq_Phi'}, we have 
\[\re \Phi_{j,l}'(z)=\Phi_{j,l}'(z)>2+\delta_j,\] 
and thus $\re f'(z)>2>0$ by \eqref{eq_f'approx}. Therefore, $f$ is injective in $R_{j,l}^1$, and $f^{j}$ is injective in each $T_j\in\cT_j$. Using \eqref{eq_fj'}, $f^{j-1}(T_j)=R_{j,l}^1$ for some $l$, and that $f(R_{j,l}^1)\subset Q_{j+1}^3=\bigcup_{m=1}^{16}R_{j+1,m}^3$, we get
\begin{equation}
\begin{split}
\meas\left(T_j\setminus\bigcup_{T_{j+1}\in\cT_{j+1}}T_{j+1}\right)&\le\frac{1}{\min_{z\in T_j}|(f^{j})'(z)|^2}\meas\left(f^{j}\left(T_j\setminus\bigcup_{T_{j+1}\in\cT_{j+1}}T_{j+1}\right)\right)\\
&<2^{-2j}\meas\left(f(R_{j,l}^1)\setminus\bigcup_{m=1}^{16}R_{j+1,m}^1\right)\\
&\le2^{-2j}\meas\left(\bigcup_{m=1}^{16}R_{j+1,m}^3\setminus R_{j+1,m}^1\right)\\
&=2^{-2j}\sum_{m=1}^{16}(2^{-j-4})^2-(2^{-j-4}-4\delta_{j+1})^2\\
&=2^{-2j}\cdot16\cdot(2\cdot2^{-j-4}\cdot4\delta_{j+1}-16\delta_{j+1}^2)\\
&<2^{-2j}\cdot16\cdot2\cdot2^{-j-4}\cdot4\delta_{j+1}\\
&=2^{-3j+3}\delta_{j+1}=2^{-5j-5}.
\end{split}
\end{equation}
Moreover, $|\cT_j|=16^{j-1}$. Thus,
\begin{equation}
\begin{split}
\meas(R_{1,1}^1\setminus\cJ(f))&\le\meas(R_{1,1}^1\setminus T)\le\sum_{j=1}^\infty\sum_{T_j\in\cT_j}\meas\left(T_j\setminus\bigcup_{T_{j+1}\in\cT_{j+1}}T_{j+1}\right)\\
&\le\sum_{j=1}^\infty 16^{j-1}\cdot2^{-5j-5}=\sum_{j=1}^\infty2^{-j-9}=2^{-9}.
\end{split}
\end{equation}
On the other hand,
\[\meas(R_{1,1}^1)=(2^{-4}-4\delta_1)^2=(2^{-4}-4\cdot2^{-8})^2=2^{-8}(1-2^{-2})^2=\frac{9}{16}2^{-8}=\frac{9}{8}2^{-9}.\]
So $\meas(R_{1,1}^1)>\meas(R_{1,1}^1\setminus\cJ(f))$ and thus $\meas(\cJ(f))>0$.
\end{proof}

\paragraph{Acknowledgements} I would like to thank Weiwei Cui for drawing my attention to the topic by asking the question whether Julia sets of transcendental entire functions can have positive finite measure during the defence of my Ph.D. thesis. I also thank Walter Bergweiler for helpful remarks.

\bibliographystyle{plain}
\bibliography{Julia_positive_finite}

\end{document}